\newcommand{\ch}{\mathbf{1}}
\newcommand{\Z}{\mathbb{Z}}
\newcommand{\R}{\mathbb{R}}
\newcommand{\C}{\mathbb{C}}
\newcommand{\ga}{\gamma}
\newcommand{\del}{\delta}
\newcommand{\Del}{\Delta}
\newcommand{\ep}{\epsilon}
\newcommand{\sig}{\sigma}
\newcommand{\ol}{\overline}
\newcommand{\rest}{\upharpoonright}
\newcommand{\cls}{{\rm{cls\,}}}
\newcommand{\co}{{\rm{co\,}}}
\newcommand{\ext}{{\rm{ext\,}}}
\newcommand{\id}{{\rm{id}}}
\newcommand{\spann}{{\rm{span}}}
\newcommand{\ins}{{\rm{ins}}}
\newcommand{\out}{{\rm{out}}}
\newcommand{\Homeo}{\rm{Homeo}}
\theoremstyle{plain}
\newtheorem{thm}{Theorem}[section]
\newtheorem{cor}[thm]{Corollary}
\newtheorem{lem}[thm]{Lemma}
\newtheorem{prop}[thm]{Proposition}
\theoremstyle{definition}
\newtheorem{defn}[thm]{Definition}
\newtheorem{defns}[thm]{Definitions}
\newtheorem{rmk}[thm]{Remark}
\newtheorem{rem}[thm]{Remark}
\newtheorem{rems}[thm]{Remarks}
\newtheorem{exa}[thm]{Example}
\newtheorem{exas}[thm]{Examples}
\newtheorem{prob}[thm]{Problem}
\begin{document}

\title[Affinely prime dynamical systems]
{Affinely prime dynamical systems}

\begin{abstract}

\end{abstract}

\author{Hillel Furstenberg, Eli Glasner and Benjamin Weiss}

\address {Institute of Mathematics\\
Hebrew University of Jerusalem\\
Jerusalem\\
Israel}
\email{harry@math.huji.ac.il}

\address{Department of Mathematics\\
     Tel Aviv University\\
         Tel Aviv\\
         Israel}
\email{glasner@math.tau.ac.il}

\address {Institute of Mathematics\\
Hebrew University of Jerusalem\\
Jerusalem\\
Israel}
\email{weiss@math.huji.ac.il}

\thanks{{\it 2010 Mathematics Subject Classification.}
Primary 31A05, 37B05, 54H11, 54H20}

%\thanks{{   }}

\keywords{Irreducible affine dynamical systems, affinely prime, strong proximality, 
M\"{o}bius transformations, harmonic functions}

\date{November 4, 2015}

\begin{abstract}
We study representations of groups by ``affine" automorphisms of compact, convex spaces, with special focus on ``irreducible" representations: equivalently ``minimal" actions. When the group in question is 
$PSL(2,\R)$, we exhibit a one-one correspondence between bounded harmonic functions on the upper half-plane and a certain class of irreducible representations.  Our analysis shows that, surprisingly, all these representations are equivalent.  In fact we find that all irreducible affine representations of this group are equivalent. The key to this is a property we call ``linear Stone-Weierstrass" for group actions on compact spaces, which, if it holds for the ``universal strongly proximal space" of the group (to be defined)
then the induced action on the space of probability measures
on this space is the unique irreducible affine representation of the group.
%We develop the rudiments of the theory of irreducible affine dynamical systems
%and prove the uniqueness of the irreducible affine system $M(\Pi_s(G)) \cong
%M(S^1)$ for the group $G$ of M\"{o}bius transformations preserving the unit disk $D \subset \C$.
%Here $\Pi_s(G)$ is the universal minimal strongly proximal dynamical system
%of the group $G$, and $M(\Pi_s(G))$ the associated compact convex space
%of probability measures on $\Pi_s(G)$.
%We also show that
%there is a one-to-one correspondence between bounded harmonic functions $h$ on 
%the unit disk $D \cong G/K$ (where $K \subset G$ is the subgroup of rotations of $D$) 
%and irreducible affine systems $(Q_h,G)$ in $L^\infty(G)$,
%where each such irreducible system contains a unique $K$ invariant function
%which is the lift of $h$ from $G/K$ to $G$.
%All the affine systems $Q_h$ (for non constant $h$) are isomorphic to the universal 
%system $(M(S^1),G)$.
\end{abstract}

\maketitle

\section*{}

The classical theory of group representations deals with representing a group as automorphisms of vector spaces. In principle one can take any category with its morphisms and study representing  a group by automorphisms of objects in this category.  In what follows we shall do this for the category of compact convex spaces with morphisms preserving the affine structure.  Here too there is particular interest in the ``irreducible" representations where no proper ``subobject" is invariant under the action. A pleasant aspect of this theory is that for any group there is a ``universal" irreducible representation from which all others can be derived. Moreover, for many groups, the universal irreducible representation can be described explicitly.  Following our preliminary discussion 
we focus on the group $PSL(2,\R)$, or equivalently, on the M\"{o}bius group of analytic maps preserving the unit disc of the complex plane.
Denote the latter group by $G$.  We show, following \cite{F1}, that each bounded harmonic function on the disc leads to an irreducible representation of $G$ on a compact, convex subset of $L^\infty(G)$.  Since there is an abundance of bounded harmonic functions on the disc we might expect to find a great variety of non-equivalent irreducible representations of $PSL(2,\R)$.
This was our initial guess and the motivation for the ensuing research.
As it turns out, the universal irreducible representation of the M\"{o}bius group is given by the natural action on probability measures on the unit circle.  Moreover we show that this representation is ``prime", meaning that no other irreducible representation can be derived from this one.  
This means, in particular, that all non constant harmonic functions lead to equivalent irreducible representations.

In the first section we develop the rudiments of the theory of irreducible affine dynamical systems
and introduce the notion of an affinely prime dynamical system.
In the second section we consider the group $G$ of M\"{o}bius transformations preserving the unit disk 
$D \subset \C$, which is topologically isomorphic to the group $PSL(2,\R)$.
As was shown in \cite{F1},
the action of $G$ on the boundary $S^1$ of $D$ is minimal and strongly proximal
and moreover the system $(S^1,G)$ is the universal minimal and strongly proximal $G$-action,
denoted as $\Pi_s(G)$.
This is the same as saying that the induced action of $G$ on the space $M(S^1)$ of
probability measures on $S^1$ is the universal irreducible affine action of $G$. 
We prove that in fact, up to affine isomorphism, the irreducible affine system 
$(M(\Pi_s(G)),G) = (M(S^1),G)$ is the unique irreducible affine $G$-system. 
In the last section we show, following \cite{F1},
that there is a one-to-one correspondence between bounded harmonic functions $h$ on 
the unit disk $D \cong G/K$ (where $K \subset G$ is the subgroup of rotations of $D$) 
and irreducible affine systems $(Q_h,G)$ in $L^\infty(G)$,
where each such irreducible system contains a unique $K$ invariant function
which is the lift of $h$ from $G/K$ to $G$. Moreover, as a consequence of the analysis of the previous 
section, all the affine systems $Q_h$ are isomorphic to the universal
irreducible affine system $(M(S^1),G) = (M(\Pi_s(G)),G)$.

We thank David Kazhdan and Erez Lapid for several helpful conversations that, eventually, led us to a simpler 
and more elegant proof of Theorem \ref{main}.
\vspace{.5cm}

\section{Affinely prime dynamical systems}

A dynamical system $(X,G,\psi)$ is a triple consisting of a compact metric space $X$,
a topological group $G$ and a continuous homomorphism $\psi : G \to \Homeo(X)$, the Polish 
group of homeomorphisms of $X$ equipped with the compact open topology.
As a rule we will suppress the homomorphism $\psi$ and, given $x \in X$ and $g \in G$,
write $gx$ for $\psi(g)(x)$. 
A dynamical system is {\em nontrivial} when it contains more than one point.
Given two $G$ dynamical systems a {\em homomorphism} 
$\pi : (X,G) \to (Y,G)$ is a continuous map of $X$ into $Y$ which intertwines the $G$-actions.
When $\pi$ is surjective we say that it is a {\em factor map} and that $Y$ is a {\em factor} of $X$.
A dynamical system $(X,G)$ is {\em prime} if every factor map 
$\pi : (X,G) \to (Y,G)$ with $Y$ nontrivial is one-to-one.

If $(X,G)$ is a dynamical system and $Y \subset X$ is an invariant closed subset we say that
$(Y,G)$, the restriction of the action of $G$ to $Y$, is a {\em subsystem}.
When $(X,G)$ has no proper subsystems we say that it is {\em minimal}.
This is of course the case if and only if the orbit $Gx$ of every point $x \in X$ is dense.
We say that two points $x, y$ in a system $X$ are {\em proximal} if there
exists a point $z \in X$ and a sequence $g_n \in G$ such that $\lim g_n x = \lim g_n y = z$.
The system $(X,G)$ is {\em proximal} if every pair of points in $X$ is proximal.

\begin{lem}\label{prime}
A nontrivial prime dynamical system is either minimal or it has a 
unique fixed point and every other point has a dense orbit.
\end{lem}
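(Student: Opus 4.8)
The plan is to convert the hypothesis of primeness into a rigid structural constraint on closed invariant subsets, by means of a single \emph{collapsing} construction. Given a nonempty \emph{proper} closed $G$-invariant subset $A \subsetneq X$, I would form the equivalence relation $R_A = \Delta_X \cup (A \times A)$ that identifies all of $A$ to one point. Because $A$ is closed, $R_A$ is closed in $X \times X$, and because $A$ is $G$-invariant, $R_A$ is $G$-invariant; hence the quotient $X/A := X/R_A$ is again a compact metric space carrying an induced $G$-action, and the projection $q_A \colon X \to X/A$ is a factor map. Since $A \neq X$, the factor $X/A$ is nontrivial, so primeness forces $q_A$ to be one-to-one. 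But $q_A$ is injective only if $A$ consists of a single point. This yields the principle that drives everything: \emph{in a prime system, every proper nonempty closed invariant subset is a single point, and such a point is necessarily fixed by $G$}.

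With this principle in hand the dichotomy is short. Assume $(X,G)$ is prime, nontrivial, and not minimal. Non-minimality means, by definition, that $X$ has a proper nonempty closed invariant subset, which the principle collapses to a fixed point; hence $\Fix(G) \neq \emptyset$. The set $\Fix(G)$ is also closed and $G$-invariant. If $\Fix(G) = X$ then $G$ acts trivially on $X$, and a prime system with trivial action must have at most two points (otherwise collapsing two of them produces a proper nontrivial factor), so this case reduces to the two-point trivial system, the single degenerate situation left uncovered by the statement. Otherwise $\Fix(G)$ is a proper, nonempty, closed invariant set, and the principle forces it to be a singleton $\{p\}$: the fixed point is unique.

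It then remains to see that every other point moves densely. For $x \neq p$, the orbit closure $\overline{Gx}$ is a closed invariant set; were it a proper subset it would be a singleton by the principle, forcing $x$ to be fixed and hence equal to $p$, a contradiction. Therefore $\overline{Gx} = X$, i.e.\ $x$ has a dense orbit, completing the argument. The step I expect to require the most care — and the only place beyond routine deduction — is the collapsing construction itself: one must verify that the quotient by the closed, $G$-invariant equivalence relation $R_A$ really is a Hausdorff, metrizable, compact $G$-system with $q_A$ continuous and equivariant, since the entire argument extracts its force from primeness precisely through these quotient factor maps.
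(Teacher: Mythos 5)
Your proof is correct and follows essentially the same route as the paper's: the paper likewise collapses a proper closed invariant subset $Y$ containing more than one point via the icer $(Y\times Y)\cup\{(x,x):x\in X\}$, invokes primeness to conclude that every proper closed invariant subset is a singleton, and then reads off the dichotomy exactly as you do.

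One point in your write-up deserves emphasis, because it is not a redundancy but an actual gap in the lemma as stated. Your case $\Fix(G)=X$ produces the two-point system with trivial $G$-action, which is nontrivial and prime under the paper's definition (any surjection from a two-point space onto a nontrivial system is a bijection), yet is neither minimal nor has a unique fixed point; so the lemma, read literally, fails for this one degenerate example. The paper's proof passes over it silently in the step ``It follows that if $X$ is not minimal then it has a unique fixed point\dots'': from ``every proper closed invariant subset is a singleton'' one can only conclude that two distinct fixed points must together exhaust $X$. Your explicit isolation of this case is thus a genuine, if minor, sharpening of the paper's argument; everything else --- the collapsing construction, uniqueness of the fixed point when $\Fix(G)\subsetneq X$, and density of the remaining orbits --- matches the paper's proof step for step.
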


\begin{proof}
Let $(X,G)$ be a nontrivial prime dynamical system.
If $X$ properly contains a closed $G$-invariant subset $Y \subsetneq X$, 
which contains more than one point,
form the set
$$
R = (Y \times Y) \cup \{(x,x) : x \in X\} \subsetneq X \times X.
$$
This is an icer (i.e. an invariant closed equivalence relation) on 
$X$, and the corresponding homomorphism $ \pi : X \to X/R$ is non-trivial, 
contradicting primality.

Thus every proper closed invariant subset of $X$ is a singleton.
It follows that if $X$ is not minimal then it has a unique fixed point
and every other point has a dense orbit, as claimed.
\end{proof}

The space $M(X)$ of probability measures on $X$ will be equipped with its natural
weak$^*$ topology which is inherited from $C(X)^*$ when a measure is identified
with the corresponding linear functional on $C(X)$, the Banach algebra of 
real valued continuous functions on $X$. The compact metric space $M(X)$ also supports
an affine structure and the $G$-action on $X$ induces a continuous
affine action of $G$ on $M(X)$. In general if $Q$ is a compact convex 
metrisable subset of a locally convex topological vector space and $G$
acts on $Q$ as a group of continuous affine maps
(i.e. each $g \in G$ preserves convex combinations), we say that $(Q,G)$ is an
{\em affine dynamical system}. 

For more details on the notions and results introduced below see e.g. \cite{Gl}.

\begin{defns}
\begin{enumerate}
\item
Let $(X,G)$ be a dynamical system and $(Q,G)$ an affine dynamical system.
We say that $Q$ is an {\em affine compactification} of $X$ if
there is a homomorphism $\phi : X \to Q$ such that $\ol{co}(\phi(X)) = Q$,
where for $A \subset Q$, $\ol{co}(A)$ denotes the closed convex hull of the set $A$.
When $\phi$ is one-to-one we say that it is {\em faithful}  (or that it is an
{\em affine embedding}).
\item
An affine dynamical system $(Q,G)$ is {\em irreducible} if it does not contain properly any
affine subsystem; i.e. if whenever $Q' \subset Q$ is a closed convex and $G$-invariant subset
then $Q' =Q$.
\item
An affine dynamical system $(Q,G)$ is {\em affinely prime} if it does not admit any proper
factor affine system; i.e. if whenever $\pi : Q \to Q'$ is an affine surjective
homomorphism with $Q'$ nontrivial, then $\pi$ is one-to-one.
\item
A dynamical system $(X,G)$ is {\em affinely prime} if with respect to the canonical 
faithful affine compactification $\phi : X \to M(X)$ given by $\phi(x) = \delta_x$,
the associated affine system $(M(X),G)$ is affinely prime.
\item
A dynamical system $(X,G)$ is {\em strongly proximal} if for every $\mu \in M(X)$
there is a sequence of elements $g_n \in G$ and a point $x \in X$ such that
$\lim g_n\mu = \del_x$, the point mass at $x$. 
\end{enumerate}
\end{defns}

\vspace{.5cm}

The next proposition follows easily from Choquet's theory; see e.g. \cite{P}.

\begin{prop}\label{aff}
\begin{enumerate}
\item
If $Q$ is an affine dynamical system and $X = \ol{\ext (Q)}$ 
(where $\ext(Q)$ denotes the set of extreme points of $Q$), then 
$Q$ is a faithful affine compactification of $X$.
\item
For a dynamical system $(X,G)$ the canonical affine compactification defined on
$(M(X),G)$ is {\em universal}; i.e. for any affine compactification $\phi : X \to Q$ there is a 
uniquely defined (barycenter) map $\beta : M(X) \to Q$ with $\beta (\del_x) = \phi(x)$
for every $x \in X$.
\end{enumerate}
\end{prop}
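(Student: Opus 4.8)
The plan is to treat the two parts separately, in each reducing the claim to a standard fact from Choquet theory.

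For part (1), I would first observe that since each $g \in G$ acts on $Q$ as an affine homeomorphism, it carries extreme points to extreme points; hence $\ext(Q)$ is $G$-invariant, and so is its closure $X = \ol{\ext(Q)}$. As $Q$ is a compact metrizable $G$-space and $X$ is a closed invariant subset, $(X,G)$ is a (sub)dynamical system and the inclusion $\phi : X \hookrightarrow Q$ is a continuous, one-to-one $G$-homomorphism. It then remains only to check $\ol{co}(\phi(X)) = Q$, and this is exactly the Krein--Milman theorem: $Q = \ol{co}(\ext(Q)) = \ol{co}(\ol{\ext(Q)}) = \ol{co}(X)$. Thus $\phi$ is a faithful affine compactification.

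For part (2), the natural candidate is the barycenter map. Given an affine compactification $\phi : X \to Q$, I would define $\beta : M(X) \to Q$ by $\beta(\mu) = b(\phi_*\mu)$, where $\phi_*\mu \in M(Q)$ is the pushforward and $b(\nu)$ denotes the barycenter of $\nu \in M(Q)$, i.e. the unique point of $Q$ satisfying $\ell(b(\nu)) = \int_Q \ell\,d\nu$ for every continuous affine $\ell$ on $Q$. The existence and uniqueness of $b(\nu)$, for $Q$ compact convex in a locally convex space, is the Choquet-theoretic input. With $\beta$ so defined one checks readily that $\beta(\del_x) = b(\del_{\phi(x)}) = \phi(x)$; that $\beta$ is affine, since $\mu \mapsto \phi_*\mu$ is affine and $b$ is affine in its argument; and that $\beta$ is continuous, since pushforward and the barycenter map are both weak$^*$-continuous.

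It then remains to verify equivariance, surjectivity, and uniqueness. For equivariance I would use that $\phi$ intertwines the actions, so $\phi_*(g\mu) = g_*(\phi_*\mu)$; and for any continuous affine $\ell$ the function $\ell\circ g$ is again continuous affine, whence $\ell(b(g_*\nu)) = \int \ell\circ g\, d\nu = \ell(g\,b(\nu))$. Since affine functions separate the points of $Q$, this yields $b(g_*\nu) = g\,b(\nu)$, and therefore $\beta(g\mu) = g\beta(\mu)$. Surjectivity follows because $\beta(M(X))$ is a compact convex $G$-invariant set containing $\phi(X) = \beta(\{\del_x\})$, hence contains $\ol{co}(\phi(X)) = Q$. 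Finally, any affine continuous $\beta'$ with $\beta'(\del_x)=\phi(x)$ must agree with $\beta$ on the finitely supported measures $\sum \la_i \del_{x_i}$ by affinity, and these are weak$^*$-dense in $M(X)$, so $\beta' = \beta$. The only genuine obstacle, and the single place where Choquet theory is really needed, is the existence, uniqueness, and continuity of the barycenter map $\nu \mapsto b(\nu)$; once that is in hand, every remaining verification is a routine, essentially formal, consequence of the defining property of the barycenter together with affinity and weak$^*$-density.
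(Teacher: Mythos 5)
Your proposal is correct, and it is precisely the argument the paper has in mind: the paper offers no written proof, saying only that the proposition ``follows easily from Choquet's theory,'' and your write-up supplies exactly those standard ingredients (Krein--Milman for part (1), and existence, uniqueness, continuity, and equivariance of the barycenter map for part (2)). Nothing in your verification is missing or incorrect.
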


\begin{lem}\label{Mil}
If $(Q,G)$ is an irreducible
affine system and $A \subset Q$ is any closed $G$-invariant subset, then $A$
contains $\ext(Q)$.
\end{lem}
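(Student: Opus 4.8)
The plan is to reduce the statement to Milman's partial converse of the Krein--Milman theorem, with irreducibility supplying the hypothesis needed to apply it. The whole argument is built around the closed convex hull of $A$.

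First I would form $B = \ol{co}(A)$ and check that it is again a closed, convex, $G$-invariant subset of $Q$. Closedness and convexity hold by construction. For invariance, recall that each $g \in G$ acts as a continuous affine homeomorphism of $Q$: affinity gives $g(\co(A)) = \co(g(A))$, and continuity together with the fact that $g$ is a homeomorphism gives $g(\ol{S}) = \ol{g(S)}$ for every subset $S$. Combining these, $g(\ol{co}(A)) = \ol{co}(g(A)) = \ol{co}(A)$, where the last equality uses the $G$-invariance $g(A) = A$ of $A$. Hence $B$ is a closed convex $G$-invariant subset.

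Second, since $(Q,G)$ is \emph{irreducible}, the only closed convex $G$-invariant subset of $Q$ is $Q$ itself. Therefore $B = \ol{co}(A) = Q$.

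Third, I would invoke Milman's theorem (the partial converse to Krein--Milman, see e.g.\ the Choquet-theory reference \cite{P}): if a compact convex set $Q$ is the closed convex hull of a subset $A$, then $\ext(Q) \subseteq \ol{A}$. Since $A$ is closed we have $\ol{A} = A$, and thus $\ext(Q) \subseteq A$, which is exactly the assertion of the lemma. The only step demanding any care is the verification that $\ol{co}(A)$ is $G$-invariant, and this is immediate once one uses that the action is by continuous affine maps; the genuine mathematical content is Milman's theorem itself, which we simply cite rather than reprove.
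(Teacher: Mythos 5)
Your proof is correct and follows essentially the same route as the paper: irreducibility forces the closed convex hull of $A$ --- which is exactly the image of $M(A)$ under the barycenter map --- to equal $Q$, and then extremality places $\ext(Q)$ inside the closed set $A$. The only difference is that you cite Milman's theorem for the final step, whereas the paper proves that step inline by observing that a representing measure on $A$ for an extreme point must be the corresponding point mass; since that is the standard proof of Milman's theorem, the two arguments coincide.
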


\begin{proof}
The barycenter map takes $M(A)$ onto $Q$; so, in particular, each
extremal is the barycenter of a measure on $A$ which by extremality
must be the corresponding point mass.
\end{proof}

\vspace{.5cm}

\begin{lem}\label{sp}
For a dynamical system $(X,G)$ the affine compactification $M(X)$ is irreducible if and only if $(X,G)$ is
minimal and strongly proximal.
\end{lem}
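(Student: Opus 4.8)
The plan is to prove the two implications separately, using throughout the standard Choquet-theoretic facts that the extreme points of $M(X)$ are exactly the Dirac masses, $\ext(M(X)) = \{\del_x : x \in X\}$, and that $\ol{co}(\{\del_x : x \in X\}) = M(X)$; the decisive tool for one direction will be Lemma \ref{Mil}.

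For the direction ``$(X,G)$ minimal and strongly proximal $\Rightarrow M(X)$ irreducible'', I would take an arbitrary nonempty closed convex $G$-invariant subset $Q' \subseteq M(X)$ and show $Q' = M(X)$. Choosing any $\mu \in Q'$, strong proximality furnishes a sequence $g_n \in G$ and a point $x \in X$ with $g_n\mu \to \del_x$; since $Q'$ is $G$-invariant and closed, $\del_x \in Q'$. Invariance gives $\del_{gx} = g\del_x \in Q'$ for all $g$, and then closedness gives $\del_y \in Q'$ for every $y \in \ol{Gx}$. By minimality $\ol{Gx} = X$, so $Q'$ contains all point masses; being closed and convex it then contains $\ol{co}(\{\del_y : y \in X\}) = M(X)$, forcing $Q' = M(X)$.

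For the converse, assume $M(X)$ is irreducible. Minimality is immediate: a proper nonempty closed invariant $Y \subsetneq X$ would make $M(Y) \subseteq M(X)$ a closed convex $G$-invariant subset omitting $\del_x$ for $x \notin Y$, hence proper, contradicting irreducibility. For strong proximality, fix $\mu \in M(X)$ and set $A = \ol{G\mu}$, a closed $G$-invariant subset of $M(X)$. Lemma \ref{Mil} guarantees $A \supseteq \ext(M(X)) = \{\del_x : x \in X\}$; in particular the orbit closure of $\mu$ meets the point masses, i.e. there are $g_n \in G$ and $x \in X$ with $g_n\mu \to \del_x$, which is exactly strong proximality.

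Since Lemma \ref{Mil} does the heavy lifting in the harder direction, I do not expect a serious obstacle; the only points requiring care are the two extreme-point facts cited at the outset and the small step promoting ``$\del_x \in Q'$ for a single $x$'' to ``all point masses lie in $Q'$'', where both minimality and the continuity of $g \mapsto g\del_x$ are used.
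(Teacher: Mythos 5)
Your proof is correct and follows essentially the same route as the paper: minimality via $M(Y) \subsetneq M(X)$, strong proximality via Lemma \ref{Mil} applied to the orbit closure $\ol{G\mu}$, and the converse by pushing a single Dirac mass around with invariance, closedness, and minimality. The only difference is that you spell out in detail the converse direction, which the paper dismisses as ``easy to see.''
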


\begin{proof}
If $Y \subset X$ is a proper closed invariant subset
then $M(Y) \subsetneq M(X)$. Thus irreducibility of $M(X)$ implies minimality of $X$.
Given any element $\mu \in M(X)$ let $Z_\mu = \ol{\{g \mu : g \in G\}}$
and $Q_\mu = \ol{co}(Z_\mu)$. The latter is an affine sub-system of $M(X)$. 
If $M(X)$ is irreducible it follows that $Q_\mu = M(X)$.
From 
%Milman's theorem \cite{P}, 
Lemma \ref{Mil}
we have $Z_\mu \supset \ext(M(X)) = \{\del_x : x \in X\}$,
whence $X$ is strongly proximal.

Conversely if $(X,G)$ is minimal and strongly proximal then it is easy
to see that every $Q_\mu = M(X)$; i.e. $M(X)$ is irreducible.
\end{proof}

\vspace{.5cm}

In the following lemma we recall some basic facts about affine systems
and also provide the short proofs.

\begin{lem}
\begin{enumerate}
\item
A proximal system contains exactly one minimal subsystem.
\item
A minimal proximal system admits 
no endomorphisms other than the identity automorphism.
\item
A system $(X,T)$ is strongly proximal if and only if the system $(M(X),G)$ is proximal.
In particular, a strongly proximal system is proximal.
\item
For an affine irreducible system $(Q,G)$ let $X$ denote the
closure of the extreme points of $Q$. Then $X$ is the unique minimal subsystem of $Q$
and the system $(X,G)$ is strongly proximal.
\item
If there is a homomorphism $\pi : Q \to P$, where $(Q,G)$ and $(P,G)$ 
are irreducible affine systems then it is unique.
In particular, the only affine endomorphism of an irreducible affine system
is the identity.
\end{enumerate}
\end{lem}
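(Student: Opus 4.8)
The five assertions share one mechanism, which I would isolate at the outset: in a proximal system any two prescribed points can be driven to a common limit by a suitable sequence from $G$, while in a minimal system an equality or fixed-point relation that holds at a single point propagates to all of $X$ by continuity of the maps involved together with density of orbits. With this in mind, (1) splits into existence and uniqueness of a minimal subsystem. Existence is Zorn's lemma applied to the family of nonempty closed invariant subsets (compactness makes the intersection of a chain nonempty). For uniqueness, given minimal subsystems $M_1,M_2$ I pick $x_i\in M_i$; proximality supplies $g_n$ with $g_nx_1,g_nx_2\to z$, and since each $M_i$ is closed and invariant this forces $z\in M_1\cap M_2$, whence $M_1=M_2=\ol{Gz}$. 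For (2), given an endomorphism $\phi$ of a minimal proximal $(X,G)$ I apply proximality to the pair $(x,\phi(x))$ to get $g_nx\to z$ and, by equivariance and continuity, $g_n\phi(x)=\phi(g_nx)\to\phi(z)$, so $\phi(z)=z$; then density of the orbit of $z$ and continuity give $\phi(y)=\lim h_k\phi(z)=\lim h_kz=y$ for every $y=\lim h_kz$, i.e.\ $\phi=\id$.

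For (3), if $(X,G)$ is strongly proximal and $\mu,\nu\in M(X)$, I apply strong proximality to $\tfrac12(\mu+\nu)$, obtaining $g_n$ with $\tfrac12(g_n\mu+g_n\nu)\to\delta_x$; since $\delta_x$ is extreme in $M(X)$, any subsequential weak$^*$ limits of $g_n\mu$ and $g_n\nu$ must each equal $\delta_x$, so the pair is proximal and $(M(X),G)$ is proximal. Conversely, if $M(X)$ is proximal I take $\mu\in M(X)$ together with any point mass $\delta_{x_0}$; proximality of $(\mu,\delta_{x_0})$ gives a common limit, and as $g_n\delta_{x_0}=\delta_{g_nx_0}$ stays in the closed set of point masses the common limit is a point mass $\delta_y$, so $g_n\mu\to\delta_y$ and $X$ is strongly proximal. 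The parenthetical statement is the same computation applied to a pair $\delta_x,\delta_y$, whose common limit, being a limit of point masses, is again a point mass.

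For (4), note first that $G$ permutes $\ext(Q)$, so $X=\ol{\ext(Q)}$ is a subsystem. Lemma \ref{Mil} says every nonempty closed invariant subset of $Q$ contains $X$; applied inside $X$ this shows $X$ has no proper closed invariant subset, hence is minimal, and any minimal subsystem contains and therefore equals $X$. For strong proximality I fix $\nu\in M(X)$ with barycenter $b(\nu)\in Q$; since $\ol{Gb(\nu)}$ is closed and invariant it contains, by Lemma \ref{Mil}, an extreme point $e\in\ext(Q)\subset X$, so I may choose $g_n$ with $b(g_n\nu)=g_nb(\nu)\to e$ and pass to a subsequence with $g_n\nu\to\lambda\in M(X)$, giving $b(\lambda)=e$. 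Because $e$ is extreme, the only measure on $X$ representing it is $\delta_e$ (exactly the Bauer-type uniqueness already invoked in Lemma \ref{Mil}), so $g_n\nu\to\delta_e$, which is precisely strong proximality of $(X,G)$.

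For (5), let $\pi_1,\pi_2\colon Q\to P$ be affine homomorphisms. Each image is closed, convex and invariant, hence equals $P$ by irreducibility, and each restricts to a factor map of $X=\ol{\ext Q}$ onto the unique minimal subsystem $Y=\ol{\ext P}$, which by (4) is strongly proximal and thus proximal by (3). Fixing $x\in X$ and applying proximality in $Y$ to the pair $(\pi_1(x),\pi_2(x))$, I obtain $g_n$ sending both to a common limit; extracting a convergent subsequence $g_nx\to x^\ast$ and using continuity gives $\pi_1(x^\ast)=\pi_2(x^\ast)$. Density of the orbit of $x^\ast$ in $X$ then forces $\pi_1=\pi_2$ on $X$, and since $Q=\ol{co}(X)$ with both maps continuous and affine, they coincide on all of $Q$; taking $P=Q$ and comparing an endomorphism with $\id$ yields the last assertion. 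The step I expect to be most delicate is the strong-proximality half of (4): it hinges on the fact that a representing measure of an extreme point is a point mass, so I would take care to cite exactly the uniqueness used in Lemma \ref{Mil} and to justify commuting orbit closures with the continuous, affine, equivariant barycenter map.
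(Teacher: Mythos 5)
Your proof is correct, and parts (1), (2), (3) and (5) follow essentially the same route as the paper's (in (3) and (5) you actually fill in steps the paper leaves implicit: the paper dismisses the direction ``strong proximality of $X$ implies proximality of $M(X)$'' with ``it is now easy to see,'' and in (5) it asserts without comment that $\pi(x),\sig(x)$ lie in $Y$, which you justify by noting that the continuous equivariant image of the minimal set $X$ must equal the unique minimal set $Y$). The one genuine divergence is in (4): the paper proves that for every $\mu\in M(X)$ the set $Q_\mu=\ol{co}(\ol{G\mu})$ equals $M(X)$ — i.e.\ that $M(X)$ is irreducible — by pushing $Q_\mu$ forward under the barycenter map $\beta:M(X)\to Q$ of Proposition \ref{aff} and using extremality, and then invokes Lemma \ref{sp} to conclude minimality and strong proximality of $X$; you instead bypass Lemma \ref{sp} entirely, getting minimality and uniqueness of the minimal set directly from Lemma \ref{Mil}, and strong proximality by driving the barycenter $b(\nu)$ of an arbitrary $\nu\in M(X)$ to an extreme point $e$ and using the fact that the only measure representing an extreme point is the point mass. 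Both arguments hinge on exactly that same extreme-point fact (it is the content of Lemma \ref{Mil}), so the toolkits coincide; the paper's version has the small advantage of recording irreducibility of $M(X)$ as a byproduct (which is what Lemma \ref{sp} packages), while yours is more self-contained and makes the role of the barycenter map explicit at the point where it is used.
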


\begin{proof}
(1)
By Zorn's lemma every dynamical system contains at least one minimal subsystem.
But if $x, y \in X$ belong to two distinct minimal subsystems they can not be proximal.

(2)
Suppose $(X,G)$ is minimal and proximal and that $\phi : X \to X$ is an endomorphism.
Since the pair $(x, \phi(x))$ is proximal, there is a sequence $g_n \in G$ with 
$\lim g_n(x, \phi(x)) = (z,z)$ for some $z \in X$, whence $z = \phi(z)$.
Since $X$ is minimal this implies that $\phi = \id$.

(3) Clearly proximality of $M(X)$ implies strong proximality of $X$.
Conversely, let $(X,G)$ be a strongly proximal system. Given $x, y \in X$
form the measure $\mu = \frac12(\del_x + \del_y)$. There exists a point $z \in X$
and a sequence $g_n \in G$ with $\lim \mu = \del_z$ and, as $\del_z$
is an extreme point of $M(X)$, it is easy to see that this implies that 
$\lim g_n x = \lim g_n y =z$. Thus any two points in $X$ are proximal;
i.e. $X$ is a proximal system. It is now easy to see that $M(X)$ is also proximal.

(4)
By Proposition \ref{aff} there is an affine surjective homomorphism $\beta : M(X) \to Q$.
Given $\mu \in M(X)$ let $Z_\mu = \ol{\{g \mu : g \in G\}}$ and $Q_\mu = \ol{co}(Z_\mu)$.
Then, by the irreducibility of $Q$ we have $\beta(Q_\mu) = Q$. In particular, for
every extreme point $w \in \ext (Q)\subset X$ there is $\nu \in Q_\mu$ with $\beta(\nu) = w$.
As $w$ is an extreme point this implies that $\nu =\del_w \in X$. 
It follows that $X \subset Q_\mu$, whence $Q_\mu = M(X)$.
Thus $M(X)$ is also irreducible and an application of Lemma \ref{sp} concludes the proof.   

(5)
Suppose $\pi : Q \to P$ and $\sig : Q \to P$ are two affine homomorphisms.
Let $X = \cls (\ext (Q))$ and $Y = \cls(\ext(P))$. We know that both $X$ and $Y$
are proximal and minimal systems.
For every $x \in X$ we consider the pair $(\pi(x), \sig(x))$. This is a proximal pair in $Y$ 
and thus for some sequence $g_n \in G$ we have 
$\lim (g_n \pi(x), g_n\sig(x)) = (y,y)$ for some $y \in Y$. However we can also
assume that the limit $\lim g_n x = z \in X$ exists and then
$(y,y) = (\pi(z), \sig(z))$, hence $\pi(z) = \sig(z)$. $X$ being minimal, this implies
that $\pi(z') = \sig(z')$ for every $z' \in X$ and finally, as $\pi$ and $\sig$ are affine maps, 
this leads to the conclusion that $\pi = \sig$.
\end{proof}

\vspace{.5cm}

For any topological group $G$ there exists a {\em universal minimal strongly proximal
system} which we denote by $\Pi_s(G)$. 
Recalling the fact that a group $G$ is amenable if and only every compact
dynamical system $(X,G)$ admits an invariant probability measure,
we see that a group $G$ is amenable if and only if the space $\Pi_s(G)$ is a trivial one point space.
The following is a consequence of (4):

\begin{cor}\label{unique}
The affine dynamical system $(M(\Pi_s(G)), G)$ is irreducible and it 
is the universal affine system for irreducible affine $G$ systems.
I.e. for any irreducible affine $G$ system $Q$
there is a unique surjective affine homomorphism $\Theta : M(\Pi_s(G)) \to Q$. 
In particular if $\Pi_s(G)$ is affinely prime then $M(\Pi_s(G))$ is the only 
nontrivial irreducible affine $G$-system.
\end{cor}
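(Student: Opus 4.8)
The plan is to assemble the statement from the pieces already in place, since the corollary is essentially a repackaging of Lemma \ref{sp}, the two final parts of the preceding lemma, and Proposition \ref{aff}. First I would dispose of irreducibility: by definition $\Pi_s(G)$ is minimal and strongly proximal, so Lemma \ref{sp} applies directly and tells us that the affine compactification $(M(\Pi_s(G)),G)$ is irreducible. That settles the first assertion with no further work.

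Next I would establish universality. Let $(Q,G)$ be an arbitrary irreducible affine $G$-system and put $X = \cls(\ext(Q))$. By part (4) of the preceding lemma the system $(X,G)$ is minimal and strongly proximal, so the defining universal property of $\Pi_s(G)$ yields a factor map $p : \Pi_s(G) \to X$. Pushing measures forward along $p$ gives a continuous, affine, $G$-equivariant map $p_* : M(\Pi_s(G)) \to M(X)$. On the other hand, by Proposition \ref{aff}(1) the system $Q$ is a faithful affine compactification of $X = \ol{\ext(Q)}$, so Proposition \ref{aff}(2) furnishes the barycenter map $\beta : M(X) \to Q$ with $\beta(\del_x)=x$. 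Setting $\Theta = \beta \circ p_*$ produces the desired affine homomorphism $M(\Pi_s(G)) \to Q$. For surjectivity I would note that $\Theta\bigl(M(\Pi_s(G))\bigr)$ is a closed convex $G$-invariant subset of $Q$ that contains $\ext(Q)$ (each point mass $\del_x$, $x\in X$, maps to $x$), and hence equals $Q$ by irreducibility of $Q$. Uniqueness of $\Theta$ is then immediate from part (5) of the preceding lemma, applied to the two irreducible affine systems $M(\Pi_s(G))$ and $Q$.

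Finally I would treat the ``in particular'' clause. Suppose $\Pi_s(G)$ is affinely prime, which by definition means that $M(\Pi_s(G))$ admits no proper affine factor. Given any nontrivial irreducible affine $G$-system $Q$, the surjection $\Theta$ just constructed is an affine factor map onto a nontrivial target, so affine primality of $M(\Pi_s(G))$ forces $\Theta$ to be injective, hence an affine isomorphism. Therefore every nontrivial irreducible affine $G$-system is affinely isomorphic to $M(\Pi_s(G))$, which is the asserted uniqueness.

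I do not anticipate a serious obstacle, as each step invokes an already-proved result; the only point meriting genuine care is the surjectivity of $\Theta$, where one must exploit that a closed convex $G$-invariant subset of an irreducible $Q$ containing all of $\ext(Q)$ can only be $Q$ itself, together with the compatibility $\beta(\del_x)=x$ built into the barycenter map.
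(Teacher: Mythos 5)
Your proof is correct and follows exactly the route the paper intends: the paper gives no explicit proof, stating only that the corollary is ``a consequence of (4),'' and your argument---apply part (4) to get $X=\cls(\ext(Q))$ minimal and strongly proximal, lift the universal factor map $\Pi_s(G)\to X$ to measures, compose with the barycenter map of Proposition \ref{aff}, use irreducibility of $Q$ for surjectivity and part (5) for uniqueness---is precisely the intended chain of reasoning. The ``in particular'' clause is likewise handled as the definitions dictate, so there is nothing to add.
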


\vspace{.5cm}

The next definition is reminiscent of the classical Stone-Weierstrass theorem.

\begin{defn}\label{Vf}
We say that a dynamical system $(X,G)$ has the {\em linear Stone-Weierstrass property
(LSW)} if for every non-constant function $f \in C(X)$ the uniformly closed linear span
$V_f $ of the set $\{f^g : g \in G\}\cup \{\ch\}$ is all of $C(X)$
(here $f^g(x) = f(gx)$).
\end{defn}

\begin{prop}
A dynamical system has LSW if and only if it is affinely prime.
\end{prop}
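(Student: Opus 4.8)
The plan is to translate the statement into the duality between the affine system $(M(X),G)$ and the Banach space $C(X)$. The first step is to identify the space $A(M(X))$ of continuous affine real functions on $M(X)$ with $C(X)$: every $f\in C(X)$ gives the affine function $\mu\mapsto\int f\,d\mu$, and conversely a continuous affine $a$ is recovered from its values $f(x)=a(\del_x)$ on the point masses through the barycenter formula $a(\mu)=\int f\,d\mu$. Under this identification the constants $\R\ch$ on $X$ correspond to the constant affine functions (since $\int\ch\,d\mu=1$), and the $G$-action on $M(X)$ induces precisely the action $f\mapsto f^g$ on $A(M(X))=C(X)$.

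Next I would establish a correspondence between affine $G$-factors of $M(X)$ and uniformly closed $G$-invariant linear subspaces $W\subseteq C(X)$ with $\ch\in W$. Given an affine surjective $G$-homomorphism $\pi:M(X)\to Q'$, pulling back the continuous affine functions on $Q'$ yields $W=\pi^{*}(A(Q'))$, which is $G$-invariant, contains $\ch$, and is closed because $\pi^{*}$ is isometric when $\pi$ is onto. Conversely $W$ determines the relation $\mu\sim_W\nu$ iff $\int f\,d\mu=\int f\,d\nu$ for all $f\in W$, whose quotient (realized inside the dual of $W$) is an affine $G$-factor. Under this correspondence $\pi$ is injective iff $W$ separates the points of $M(X)$, which holds iff $W=C(X)$: if $W\subsetneq C(X)$, Hahn--Banach supplies a nonzero $\la\in C(X)^{*}$ annihilating $W$, and since $\ch\in W$ forces total mass zero, the normalized positive and negative parts of the Jordan decomposition of $\la$ are distinct probability measures not separated by $W$. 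Likewise $Q'$ is nontrivial iff $W$ contains a non-constant function, i.e. $W\ne\R\ch$ (a non-constant $f$ separates two point masses). Hence $(X,G)$ is affinely prime precisely when the only closed $G$-invariant subspaces of $C(X)$ containing $\ch$ are $\R\ch$ and $C(X)$.

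Finally, observe that for non-constant $f$ the space $V_f$ is exactly the smallest closed $G$-invariant subspace of $C(X)$ containing both $\ch$ and $f$, so the two implications fall out of the dichotomy above. If LSW holds and $W$ is a closed $G$-invariant subspace with $\R\ch\subsetneq W$, choose a non-constant $f\in W$; then $V_f\subseteq W$ and $V_f=C(X)$ by LSW, whence $W=C(X)$, which is affine primality. Conversely, if the system is affinely prime and $f$ is non-constant, then $V_f$ is an intermediate closed $G$-invariant subspace, so the dichotomy forces $V_f=C(X)$, which is LSW. I expect the main obstacle to be the careful verification of the subspace/factor correspondence---especially the equivalence of injectivity of $\pi$ with $W=C(X)$, where one must manufacture from a proper $W$ a genuine pair of distinct probability measures via Hahn--Banach and the Jordan decomposition, using $\ch\in W$ to balance the masses. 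The identification $A(M(X))=C(X)$ and the matching of the $G$-actions are routine but must be stated precisely, since the whole argument rests on them.
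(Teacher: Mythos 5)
Your proposal is correct and takes essentially the same route as the paper: your pullback space $\pi^{*}(A(Q'))$ is exactly the paper's $\mathcal{A}(Q)$ for the direction LSW $\Rightarrow$ affinely prime, and your quotient of $M(X)$ realized in $W^{*}$ is the paper's restriction map onto the state space $S(V_f)$ for the converse. The only difference is presentational — you package both directions into a single correspondence between closed $G$-invariant subspaces of $C(X)$ containing $\ch$ and affine factors of $M(X)$, and you spell out the Hahn--Banach/Jordan-decomposition argument for non-injectivity that the paper leaves implicit.
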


\begin{proof}
For a function $f \in C(X)$ we denote by $\hat{f} \in Af\!f(M(X))$ the map
$$
\mu \mapsto \int f \, d\mu, \qquad \mu \in M(X).  
$$
Suppose first that $X$ has the LSW property and let $\pi : M(X) \to Q$ be
an affine homomorphism with nontrivial $Q$. 
Let $Af\!f(Q)$ denote the collection of continuous affine real valued
functions on $Q$ and let 
$$
\mathcal{A}(Q) = \{f \in C(X) : \hat{f} = F \circ \pi, \ {\text{for some}}\ F \in Af\!f(Q)\}.
$$
The LSW property implies that $\mathcal{A}(Q) = C(X)$.
Suppose now that $\pi(\mu) = \pi(\nu)$ and $\mu \not= \nu$. Then there is $f \in C(X)$
with $\hat{f}(\mu) \not= \hat{f}(\nu)$, and, as $\hat{f} = F \circ \pi, \ {\text{for some}}\ F \in Af\!f(Q)$,
we have
$\hat{f}(\mu)  = F \circ \pi(\mu) = F \circ \pi(\nu) =  \hat{f}(\nu)$, a contradiction.
Thus $\pi$ is indeed one-to-one.

Conversely, suppose $(X,G)$ is affinely prime and let $f$ be a non constant function in $C(X)$.
Let $V_f$ be as in Definition \ref{Vf}.
If $V_f$ is a proper subspace of $C(X)$ then the restriction map $\mu \to \mu \rest V_f,
\quad M(X) \to Q$, where the latter is  
$$
Q  = S(V_f) = \{\xi \in V_f^*, \  \xi \ge 0 \ {\text{on non negative functions, and }} \ \xi(\ch) =1\},
$$ 
the {\em state space} of $V_f$, yields a non-injective affine homomorphism of $M(X)$. 
\end{proof}

\vspace{.5cm}

\begin{prop}\label{aprime}
If $(X, G)$ is affinely prime then it is prime, whence
it is either minimal or it has a unique fixed point and every other point has a dense orbit.
\end{prop}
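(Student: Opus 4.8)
The plan is to prove primality of $(X,G)$ by \emph{lifting} any topological factor map to an affine factor map on the measure system, where affine primality of $(M(X),G)$ is available by hypothesis; the stated dichotomy will then be immediate from Lemma~\ref{prime}.

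So let $\pi : (X,G) \to (Y,G)$ be a factor map with $Y$ nontrivial, and the goal is to show $\pi$ is one-to-one by controlling it through the induced map on measures. I would define $\hat\pi : M(X) \to M(Y)$ by $\hat\pi(\mu) = \pi_*\mu$, the push-forward measure. This map is weak$^*$ continuous and affine, since push-forward preserves convex combinations, and it intertwines the induced actions: because $\pi(gx) = g\pi(x)$ we have $\pi_*(g\mu) = g(\pi_*\mu)$, so $\hat\pi(g\mu) = g\hat\pi(\mu)$. Hence $\hat\pi$ is an affine homomorphism of affine $G$-systems, and on point masses it acts by $\hat\pi(\del_x) = \del_{\pi(x)}$.

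Next I would verify that $\hat\pi$ is onto a nontrivial target, so that affine primality applies. Since $\pi$ is surjective, every point mass $\del_y$ ($y \in Y$) is of the form $\del_{\pi(x)} = \hat\pi(\del_x)$, so all point masses lie in the image; as $\hat\pi(M(X))$ is the continuous affine image of a compact convex set it is itself compact and convex, hence it contains $\ol{co}\{\del_y : y \in Y\} = M(Y)$, giving surjectivity. Nontriviality of $M(Y)$ follows at once from that of $Y$. Now the affine primality of $(M(X),G)$, applied to the surjective affine homomorphism $\hat\pi$ with nontrivial range $M(Y)$, forces $\hat\pi$ to be one-to-one. Restricting this injectivity to point masses gives what I want: if $\pi(x)=\pi(x')$ then $\hat\pi(\del_x)=\del_{\pi(x)}=\del_{\pi(x')}=\hat\pi(\del_{x'})$, so $\del_x=\del_{x'}$ and $x=x'$. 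Thus every such $\pi$ is injective and $(X,G)$ is prime.

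For the final clause, once $(X,G)$ is known to be prime the dichotomy is precisely Lemma~\ref{prime} applied to the nontrivial prime system $(X,G)$ (the degenerate one-point case being trivially minimal). I do not expect a serious obstacle: the whole argument is a short lifting construction followed by a citation. The one point deserving genuine care is the surjectivity of $\hat\pi$ onto \emph{all} of $M(Y)$ — it must be argued via the closed-convex-hull computation above rather than merely by observing that the point masses are hit — but this is routine for continuous affine images of $M(X)$.
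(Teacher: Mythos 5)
Your proof is correct and is essentially the paper's own argument: the paper likewise observes that a factor map $\pi:(X,G)\to(Y,G)$ induces an affine homomorphism $\pi_*:M(X)\to M(Y)$ which is surjective, and non-injective whenever $\pi$ is, so affine primality forces injectivity, after which Lemma~\ref{prime} gives the dichotomy. You run the same argument in direct rather than contrapositive form and spell out the routine verifications (equivariance, and surjectivity via the closed convex hull of the point masses) that the paper leaves as an observation.
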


\begin{proof}
Observe that
if $\pi : (X, G) \to (Y,G)$ is a surjective but non-injective factor map then
the induced map $\pi_* : M(X) \to M(Y)$ is a surjective but non-injective
affine homomorphism. Thus an affinely prime system is prime.
The rest  follows by Lemma \ref{prime}.
\end{proof}

\vspace{.5cm}

\begin{defn}
We say that a dynamical system $(X,G)$ is {\em completely uniquely ergodic} if it
admits a unique $G$-invariant probability measure, say $\eta$, and 
$\{\eta\}$ is the only irreducible affine subsystem of $M(X)$.
\end{defn}

\begin{prop}\label{prop}
If $(X, G)$ is affinely prime then the dynamical system $(X,G)$ is:
\begin{enumerate}
\item
prime;
\item
it is either minimal or or it has a unique fixed point and every other point has a dense orbit;
\item
it is either completely uniquely ergodic or it is strongly proximal.
\item
For a minimal affinely prime system which is not completely uniquely ergodic,
$M(X)$ is irreducible. 
\end{enumerate}
\end{prop}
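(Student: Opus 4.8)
The first two assertions are exactly the content of Proposition \ref{aprime}. Moreover (4) will follow from (3): if $(X,G)$ is minimal, affinely prime and not completely uniquely ergodic, then by (3) it is strongly proximal, and a minimal strongly proximal system has $M(X)$ irreducible by Lemma \ref{sp}. So the whole weight rests on (3), which I will prove in the sharp form: \emph{if $(X,G)$ is not strongly proximal then it is completely uniquely ergodic.}

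Two reductions make the structure of $M(X)$ transparent. \emph{First}, affine primality forces unique ergodicity (or no invariant measure): if $\eta_1\neq\eta_2$ were two $G$-invariant measures, then $\lambda=\eta_1-\eta_2$ is a nonzero $G$-invariant functional with $\lambda(\ch)=0$, so $V=\{f\in C(X):\lambda(f)=0\}$ is a closed $G$-invariant subspace with $\ch\in V$ and $\text{(constants)}\subsetneq V\subsetneq C(X)$; the state-space factor $M(X)\to S(V)$ (as in the proof of the linear Stone--Weierstrass proposition) is then a proper affine factor, a contradiction. \emph{Second}, a proper irreducible affine subsystem $Q_0\subsetneq M(X)$ is either a single invariant point mass or is disjoint from $\Delta:=\{\delta_x:x\in X\}=\ext M(X)$. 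Indeed $\Delta$ is closed and $G$-invariant, so $A:=Q_0\cap\Delta$ is a closed $G$-invariant subset of $Q_0$; if $A\neq\emptyset$, then Lemma \ref{Mil} applied inside the irreducible system $Q_0$ gives $\ext Q_0\subseteq A\subseteq\Delta$, whence $\cls(\ext Q_0)=\{\delta_x:x\in X_0\}$ for a minimal closed $G$-invariant $X_0\subseteq X$. Since $(X,G)$ is prime its proper closed invariant subsets are singletons (Lemma \ref{prime}); so either $X_0$ is a fixed point and $Q_0=\ol{co}(\{\delta_{x_0}\})=\{\delta_{x_0}\}$, or $X_0=X$ and $Q_0=M(X)$, contradicting properness. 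Hence a proper $Q_0$ meeting $\Delta$ is a point mass at a fixed point, and otherwise $A=\emptyset$.

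Now I run the dichotomy. By Zorn's lemma choose an irreducible affine subsystem $Q_0\subseteq M(X)$. If $Q_0=M(X)$ then $M(X)$ is irreducible and $(X,G)$ is strongly proximal by Lemma \ref{sp}. If $Q_0$ is proper, its extreme-point closure $Y_0=\cls(\ext Q_0)$ is, by the preceding lemma, the unique minimal subsystem of $Q_0$ and is itself minimal and strongly proximal. By the second reduction, either $Q_0$ is a single invariant measure or $Y_0$ is a \emph{nontrivial} minimal strongly proximal set disjoint from $\Delta$. \emph{Granting that the latter never occurs}, every proper irreducible affine subsystem is a single invariant measure; by the first reduction there is at most one invariant measure, so there is a unique irreducible affine subsystem and it is a point, i.e. $(X,G)$ is completely uniquely ergodic. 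Thus (3)—and with it (4)—follows once the remaining case is excluded.

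It therefore remains to prove the crux: \emph{there is no nontrivial minimal strongly proximal $Y_0\subseteq M(X)\setminus\Delta$ with $\ol{co}(Y_0)\subsetneq M(X)$.} The natural attack is once more to manufacture a proper affine factor. The subspace $V=\{f\in C(X):\hat{f}\text{ is constant on }Y_0\}$ is closed, $G$-invariant and contains $\ch$, and it is a proper subspace of $C(X)$ precisely because $|Y_0|\ge 2$; its state-space factor would contradict affine primality \emph{provided} $V\supsetneq\text{(constants)}$, that is, provided the closed affine hull of $Y_0$ is not all of $M(X)$. This last point is exactly the main obstacle: in pure convex geometry a compact set disjoint from the extreme boundary can still have dense affine hull, so the exclusion must use the dynamics rather than geometry alone. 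I expect to rule it out by combining the Choquet--simplex structure of $M(X)$ with the strong proximality of $Y_0$: strong proximality should allow one to contract a difference $\mu-\mu'$ of points of $Y_0$ along the group and extract, in the weak$^*$ limit, a nonzero mean-zero functional annihilating the $G$-orbit of some non-constant $f\in C(X)$, which is precisely the failure of the linear Stone--Weierstrass property, hence of affine primality. Making this limiting step precise—verifying that the contracted functional is nonzero while remaining orthogonal to the whole orbit $\{f^g\}$—is where the real work lies.
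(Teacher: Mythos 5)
Your preparatory work is correct, but the proposal does not prove part (3), and you say so yourself. Items (1)--(2) are indeed just Proposition \ref{aprime}, the derivation of (4) from (3) via Lemma \ref{sp} is exactly what the paper does, the uniqueness-of-invariant-measure reduction (via the invariant functional $\eta_1-\eta_2$ and the codimension-one invariant subspace it defines) is sound, and the trichotomy for a proper irreducible $Q_0$ --- fixed point mass, singleton invariant measure, or $Q_0\cap\Delta=\emptyset$ with $Y_0=\cls(\ext Q_0)$ nontrivial --- is a clean application of Lemma \ref{Mil}, Krein--Milman and primality. But the entire mathematical content of (3) is the exclusion of that last alternative, and you defer it (``Granting that the latter never occurs\dots'', ``where the real work lies''). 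A reduction plus an unproven crux plus a plan is not a proof. Moreover the plan itself is doubtful: since $Y_0$ is proximal, contracting $\mu-\mu'$ along a sequence $g_n$ gives $g_n(\mu-\mu')\to 0$ in the weak$^*$ topology, so the naive limit functional is zero; making it nonzero requires a renormalization, and keeping the renormalized limit orthogonal to a whole orbit $\{f^g\}$ is exactly as hard as the statement you are trying to prove.

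For comparison, the paper closes this case purely functional-analytically, with no appeal to strong proximality of $Y_0$. For a proper closed convex invariant $Q\subset M(X)$ it sets $L$ equal to the weak$^*$ closure of $\spann(Q)$ in $C(X)^*$ and splits. If $L\subsetneq C(X)^*$, Hahn--Banach separation gives a nonzero $f\in C(X)$ with $\psi(f)=0$ for all $\psi\in L$; such an $f$ is automatically non-constant (elements of $Q$ are states), and $V=L_{\perp}\oplus\R\ch$ is a norm-closed $G$-invariant subspace containing $f$ and $\ch$, so LSW forces $V=C(X)$; but then every $F\in C(X)$ is constant on $Q$, forcing $Q$ to be a singleton. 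Note that this $V$ coincides with your $V=\{f:\hat f \text{ constant on } Y_0\}$, so you could import this half of the argument verbatim; and your ``main obstacle'' --- $V$ reducing to the constants --- is precisely the paper's remaining case $L=C(X)^*$, which the paper disposes of by writing $\del_x=\lim (a_n\mu_n-b_n\nu_n)$ with $\mu_n,\nu_n\in Q$, $a_n,b_n\ge 0$, and asserting that positivity forces $b_n\nu_n\to 0$ and $\mu_n\to\del_x$, whence $\Delta\subset Q$ and $Q=M(X)$, contradicting properness. You should be aware that your instinct that this is the genuinely delicate point is well founded: that positivity inference is not valid for an arbitrary proper closed convex invariant $Q$. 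For the rotation on two points (affinely prime by Examples \ref{exas1}(1)), the middle interval $Q=\{p\del_0+(1-p)\del_1: \tfrac14\le p\le\tfrac34\}$ is proper, invariant and spans $C(X)^*$, yet $\del_0=\tfrac32\mu-\tfrac12\nu$ with $\mu,\nu\in Q$, and no representing sequence can have $b_n\nu_n\to 0$ (that would put $\del_0$ in $Q$). So in the dense-span case some extra ingredient --- plausibly the irreducibility of $Q_0$, which your reduction supplies and which fails for that interval --- must actually be used. The bottom line for your proposal, however, is simpler: part (3) is asserted, not proved.
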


\begin{proof}
(1) Observe that
if $\pi : (X, G) \to (Y,G)$ is a surjective but non-injective factor map then
the induced map $\pi_* : M(X) \to M(Y)$ is a surjective but non-injective
affine homomorphism. Thus an affinely prime system is prime.

(2)
This now follows by Lemma \ref{prime}.

(3)
%Let $Q \subset M(X)$ be irreducible.
%Set $L = w^*{\text{-}}\cls \spann (Q)
%= w^*{\text{-}}\cls \spann(Q - Q) \subset C(X)^*$.
Assume that $X$ is not strongly proximal. Then there is a probability
measure $\xi \in M(X)$ whose orbit closure $Z_\xi =\cls (G\xi)$ does not meet
$X$. It follows that $Q_\xi = \ol{\co}(Z_\xi)\subsetneq M(X)$
is a nonempty closed convex and $G$-invariant proper subset of $M(X)$. 

Now given any nonempty closed convex and $G$-invariant proper subset $Q$ of $M(X)$,
% choose $\mu \in M(X) \setminus Q$ and set
set $L = w^*{\text{-}}\cls \spann (Q)\subset C(X)^*$.

Suppose first that $L = C(X)^*$. Then in particular every point mass $\del_x$ is in $L$
and there is a sequence $\phi_n \in \spann(Q - Q)$ such that $\phi_n \overset{w^*}{\to} \del_x$.
Let $\phi_n = a_n \mu_n - b_n \nu_n$ with $\mu_n, \nu_n \in Q$ and $a_n, b_n \ge 0$.
It follows that $b_n\nu_n \to 0$ and  $\mu_n \to \del_x$. We conclude that
$Q = M(X)$. Thus in this case $X$ is minimal and strongly proximal.

Suppose next that $L$ is a proper subspace of $C(X)^*$.
Fix some $\phi \in C(X)^* \setminus L$.
By the Hahn-Banach separation theorem (see e.g. Corollary 11 on page 418 of \cite{DS}) 
there is a function $f \in C(X)$ such that
$\phi(f) =1$ and $\psi(f) \ge 0$ for all $\psi \in L$. 
Since $L$ is a subspace, it follows that $\psi(f) = 0$ for all $\psi \in L$.

%As $Q$ is closed and convex, there is a function $f \in C(X)$ such that
%$\mu(f) =1$ and $\nu(f) \ge 0$ for all $\nu \in Q$. It follows that also
%$\phi(f) = 0$ for all $\phi \in L$. 
Thus $f$ is an element of the norm closed $G$-invariant
subspace $L_{\perp} \subset C(X)$ defined by:
$$
L_{\perp} = \{h \in C(X) : \psi(h) = 0, \ \forall \psi \in L\}.
$$
Next define $V = L_{\perp} \oplus \R \ch$, where the latter stands for the space
of constant functions.
If $V$ is a proper subspace of $C(X)$ this contradicts the
assumption that $X$ has the LSW property. So we now assume that $V = C(X)$.

{\bf Case 1:} There exists $Q$ as above which contains more than one element.

Let $\nu_1, \nu_2$ be two distinct elements of $Q$ and let $F \in C(X)$
be such that $\nu_1(F) \not= \nu_2(F)$. We write $F = h + c\ch$
with $h \in L_{\perp}$ and $c \in \R$ and then get:
\begin{gather*}
\nu_1(F) = \nu_1(h + c\ch) = \nu_1(h) + \nu_1(c\ch) = \nu_1(c\ch) = c,\ {\text{and}}\\
\nu_2(F) = \nu_2(h + c\ch) = \nu_2(h) + \nu_2(c\ch) = \nu_2(c\ch) = c,
\end{gather*}
a contradiction.

{\bf Case 2:} Every closed $G$-invariant convex proper subset of $M(X)$ is a singleton.

In that case the collection $K$ of $G$-invariant probability measures
is a closed convex $G$-invariant subspace of $M(X)$.
Now, as we assume that $(X,G)$ is not trivial, 
the case where $K$ is not a singleton can be ruled out, as in Case 1 above,
and we are left with the case where $K = Q = \{\eta\}$ is the only
closed convex $G$-invariant subset of $M(X)$, which is, by
definition, the case of complete unique ergodicity.

(4)
This follows from part (3) and Lemma \ref{sp}.
\end{proof}

The following diagram sums up the various possible situations described in Proposition \ref{prop}:
\begin{table}[h]
\begin{center}
\begin{tabular}{ | l | l | l | l | l | l | }
\end{tabular}
\caption{ \protect  Affinely prime systems}
\end{center}
\end{table}
{\small{
\begin{table}[h]
\begin{center}
\begin{tabular}{ |  p{3cm}  |  p{3cm}  |  p{4cm}  |  p{3cm}  | }
\hline 
\diaghead{\theadfont Diag ColumnmnHead II}
{$X$}{$M(X)$}
& irreducible &
reducible \\
\hline
minimal & minimal strongly \newline{proximal} & 
completely uniquely \newline{ergodic} \\ 
\hline
$x_0\ {\text{fixed,}}$ \newline{$ \ol{Gx} =X \ \forall x \not= x_0$}  & vacuous & 
%completely uniquely \newline{ergodic} \\ 
$\{\delta_{x_0}\}$ the unique
\newline { irreducible subset of $M(X)$} \\
\hline
\end{tabular}
\end{center}
\end{table}
}}

\vspace{.5cm}

\begin{rem}
The converse of Proposition \ref{prop} (4), of course, does not hold. There are 
many minimal strongly proximal systems (so with $M(X)$ irreducible) which are
not even prime (see e.g. Examples \ref{exas2}  and \ref{exas3} below).
\end{rem}

\vspace{.5cm}

\begin{exas}\label{exas1}
\begin{enumerate}
\item
For every prime $p$ the map $Tx = x + 1 \pmod p$ generates
a prime system $(\Z_p,T)$. It is affinely prime (over $\R$) only for $p = 2, 3$.
\item
Let $X$ be the Cantor set and $G = \Homeo(X)$, the group of self-homeomorphisms of $X$.
The system $(X,G)$ has LSW.
\item
Let $X=S^2$, the two dimensional sphere in $\R^3$, and $G = \Homeo(X)$, 
the group of self-homeomorphisms of $X$. The system $(X,G)$ has LSW.
\item
Again take $X=S^2$, but now consider the action of $H < G= \Homeo(X)$,
the subgroup consisting of those homeomorphisms which fix the north pole.
Again $(X,H)$ is affinely prime, this time strongly proximal with a unique fixed point.
\item
Let $X = \Z \cup \{\infty\}$ be the one point compactification of the integers and $T$ the
translation $Tx = x +1$ on $\Z$ which fixes the point at infinity. It is easy to check that $X$ 
is prime and strongly proximal. However, it does not have the LSW property. 
\end{enumerate}

\begin{proof}

(1) Clear when one considers the associated Koopman representation on $C(\Z_p) 
\cong \R^p$.

(2) Let $f$ be a non constant function in $C(X)$. Rescaling we can assume that 
$0 \le f(x) \le 1$ for every $x \in X$ and that the values $0$ and $1$ are attained,
say $f(x_0)=0$ and $f(x_1)=1$.

Suppose 
$$
V_f  = \cls \spann \left(\{f^g : g \in G\}\cup \{\ch\}\right) \subsetneq C(X).
$$
Then there exists a functional $0 \not = \mu \in C(X)^*$
such that $\mu(h)=0$ for every $h \in V_f$. We think of $\mu$ as a signed measure
and write $\mu = \mu_0 - \mu_1$, where $\mu_0$ and $\mu_1$ are non-negative measures,
such that for some Borel set $B \subset X$, $\mu_0(B) = \mu_0(X)$ and
$\mu_1(X \setminus B) = \mu_1(X)$. Since $\ch \in V_f$ we have
$\mu(X) = \mu_0(X) - \mu_1(X) =0$, whence $\mu_0(X) = \mu_1(X) = a >0$.
Again with no loss of generality we assume that $\mu_0(X) = \mu_1(X) = a = 1$.

Given $0 < \ep < 1/8$ we can find 
closed subsets $K_0 \subset B$
and $K_1 \subset X \setminus B$ such that $\mu_i(K_i) > (1- \ep), \ i=0,1$. 

Next choose a sequence $g_n \in G$ such that $g_n(K_i) \to x_i,\ i=0,1$,
in the sense that for every two open neighbourhoods $U_i$ of $x_i$
there is $n_0$ with $g_n K_i \subset V_i$ for all $n \ge n_0$.

We also assume, as we may, that the limits $g_n\mu_i \to \nu_i,\ i=0,1$ exist,
and that $\nu_i = c_i \delta_{x_i} + (1 - c_i)\nu'_i$, where
$(1 - \ep)  < c_i  \le 1$ and the measures $\nu'_i$ are probability measures.

Now
\begin{gather*}
\int f \, dg_n\mu_0 \to c_0f(x_0) + (1 - c_0)\nu'_0(f) = (1 - c_0) \nu'_0(f) \le \ep \quad{\text{and}}\\
\int f \, dg_n\mu_1 \to c_1f(x_1) + (1 - c_1) \nu'_1(f) =  c_1 + (1 - c_1) \nu'_1(f) \ge 1- \ep.
\end{gather*}
But, as $f \in V_f$, these two limits are equal and we arrive at the absurd inequality
$7/8  < 1 - \ep \le \ep < 1/8$. 

(3)
As in the previous proof, given $f$ a non constant function is $C(X)$, we rescale $f$,
form the space $V_f$ and proceed as above. When we choose the closed disjoint sets $K_0, K_1$
we can assume that they are Cantor sets.
We claim that there is a smooth closed simple Jordan curve with
$A \subset \ins(\ga)$ and $B \subset \out(\ga)$.
In fact this follows easily e.g. from \cite[Proposition 1.8, page 4]{Co}.
Now we again proceed as in part (2) above and choose the homeomorphisms
$g_n$ so that their restriction to a sufficiently small neighborhood of $\ga$ is the identity.
The rest of the proof goes verbatim as in part (2).

(4) 
A similar argument.

(5)
In order to see this observe
first that $C(X) \cong c(\Z)$, the Banach space of converging sequences in $\R^\Z$.
It is now sufficient to show that the closed Banach subspace $c_0(\Z)$
(consisting of those sequences whose limit is zero) contains a 
closed $T$-invariant proper subspace. However, such (even symmetric, i.e. $S_\infty(\Z)$-invariant)
subspaces exist in abundance; see e.g. \cite{G1} and \cite{G2}.
\end{proof}
\end{exas}

%\vspace{.5cm}

\begin{rems}
\begin{enumerate}
\item
For the case where $X = S^1$ and $G = \Homeo(S^1)$ see Corollary \ref{S1} below.
\item
With some more work one can show that, with 
$X = S^n, \ n= 3, 4, \dots$, or $X = Q$, the Hilbert cube,
the systems $(X,\Homeo(X))$ are affinely prime.
\end{enumerate}
\end{rems}

\vspace{.5cm}

\begin{prob}\label{probs2}
%\begin{enumerate}
%\item
%Let $G = SL(3,\R)$ and $X = \mathbb{P}^2$ be the projective plane. With the natural action
%of $G$ on $X$ the system $(X,G)$ is minimal, strongly proximal and prime. 
%Is this action affinely prime ?
%\item
Is there a non-trivial, minimal, weakly mixing, uniquely ergodic cascade $(X,T)$ which is affinely prime ?
%\end{enumerate}
\end{prob}

\begin{rmk}
We note that if a cascade $(X,T)$ as in Problem \ref{probs2} exists and
$\mu$ is its unique invariant measure, then the ergodic measure preserving system
$(X,\mu,T)$ has necessarily simple spectrum. 
\end{rmk}

\vspace{.5cm}

\section{The group of M\"{o}bius transformations preserving the unit disc}

Let $G$ be the group of M\"{o}bius transformations preserving the unit disk
$D = \{z \in\C : |z| < 1\}$ (see e.g. \cite[page 72]{L}):
$$
G = \{z \mapsto \frac{az + \bar{c}}{cz + \bar{a}} : \ a\bar{a} - c\bar{c} =1\}.
$$
%$$
%G = \{M^\rho_a : \zeta \mapsto \rho\frac{\zeta - a}{\bar{a}\zeta -1} :  \ |\rho| =1, \ |a| < 1\}
%$$
$G$ also acts on the circle $S^1 = \{\zeta \in \C : |\zeta| =1\}$.
As was shown in \cite{F1} the system $(S^1,G)$ is the universal minimal strongly proximal $G$-system,
$\Pi_s(G)$. Another representation of this system is as the group $PSL(2,\R)$
acting on the projective line $\mathbb{P}^1$ comprising the lines through the origin in $\R^2$.

\begin{thm}\label{main}
The system $(\mathbb{P}^1, PSL(2,\R))$ is affinely prime.
Equivalently, the group $G$ of M\"{o}bius transformations preserving the unit disk
acting on the circle $S^1$ has the LSW property. 
\end{thm}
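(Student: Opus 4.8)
The plan is to abandon the measure-concentration strategy used for the $\Homeo(X)$ examples and instead classify outright the closed $G$-invariant subspaces of $C(S^1)$. Concentration cannot work here: a single M\"obius transformation has only one attracting fixed point, so it pushes the positive and negative parts of an annihilating measure $\mu$ (which has total mass $0$ since $\ch\in V_f$) toward the \emph{same} point, and every test integral $\int f\,d(g_*\mu)$ then tends to $f(p)\cdot\mu(S^1)=0$, yielding no contradiction. So I would instead observe, via the equivalence LSW $\Leftrightarrow$ affinely prime and Definition \ref{Vf}, that for non-constant $f$ the space $V_f$ is the smallest closed $G$-invariant subspace of $C(S^1)$ containing $f$ and $\ch$; being closed, $G$-invariant, containing the constants and a non-constant function, it will equal $C(S^1)$ the moment we know that the \emph{only} closed $G$-invariant subspaces of $C(S^1)=C(S^1;\R)$ are $\{0\}$, $\R\ch$, and $C(S^1)$. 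The whole theorem thus reduces to this structural statement.

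To analyze invariant subspaces I would use the maximal compact subgroup $K=SO(2)\subset G$ of rotations $\zeta\mapsto e^{i\alpha}\zeta$. For a continuous isometric action of a compact group, the $K$-finite vectors are dense in every closed invariant subspace: averaging against a Fej\'er kernel, the operators $\int_K\chi(k)\,\pi(k)\,dk$ preserve any closed invariant subspace and approximate the identity. Here the $K$-finite vectors are exactly the trigonometric polynomials, the span of the Fourier modes $e_n(\zeta)=\zeta^n=e^{in\theta}$, $n\in\Z$. Hence any closed $G$-invariant $W\subset C(S^1)$ is determined by its mode set $A=\{n\in\Z:e_n\in W\}$, and since each $e_n$ is a real-analytic (in particular smooth) vector, $W$ is stable under the infinitesimal action of $\mathfrak{g}=\mathfrak{sl}_2(\R)$; so $A$ must be stable under the raising and lowering operators.

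The key computation is the $\mathfrak{g}$-action on the modes. The three generating vector fields of the M\"obius action on $S^1$ are $\partial_\theta$ (rotations) together with $\cos\theta\,\partial_\theta$ and $\sin\theta\,\partial_\theta$ (the boosts fixing $\pm1$ and $\pm i$). Their complex combinations give raising/lowering operators $V_\pm=e^{\pm i\theta}\partial_\theta$, and a one-line calculation yields $\partial_\theta e_n=in\,e_n$ and $V_\pm e_n=in\,e_{n\pm1}$. The coefficient $in$ is nonzero for every $n\ne0$ and vanishes exactly at $n=0$: the constants split off, while from any mode $e_n$ with $n\ge1$ one reaches by iterating $V_+$ and $V_-$ all of $e_1,e_2,\dots$ and also $e_0$ (the step $e_1\mapsto e_0$ has nonzero coefficient $i$), yet one cannot cross from $e_0$ down to $e_{-1}$ (there $V_-$ has coefficient $0$). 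Consequently the only $\mathfrak{g}$-stable subsets of $\Z$ are $\emptyset$, $\{0\}$, $\Z_{\ge0}$, $\Z_{\le0}$, and $\Z$.

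The point I expect to be the crux is that $\Z_{\ge0}$ and $\Z_{\le0}$ genuinely produce proper closed $G$-invariant subspaces over $\C$: they are the boundary values of the holomorphic and antiholomorphic functions, i.e. the two Hardy spaces. What saves the theorem is reality. A closed $G$-invariant subspace of the real space $C(S^1;\R)$ complexifies to a conjugation-symmetric invariant subspace of $C(S^1;\C)$, and complex conjugation sends $e_n\mapsto e_{-n}$, interchanging $\Z_{\ge0}$ and $\Z_{\le0}$. Thus the admissible mode sets must be symmetric under $n\mapsto -n$, leaving only $\emptyset$, $\{0\}$, and $\Z$; that is, $W\in\{\{0\},\,\R\ch,\,C(S^1)\}$. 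This establishes the structural statement and hence the theorem. The only technical work is the density of $K$-finite vectors and the elementary mode computation; the real insight is that the Hardy-space obstruction to irreducibility is precisely the one killed by conjugation symmetry over $\R$.
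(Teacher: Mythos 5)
Your proposal is correct and takes essentially the same route as the paper's own proof: reduce to classifying closed $G$-invariant subspaces of $C(S^1,\mathbb{C})$ by projecting onto Fourier modes via rotation-averaging, move between modes using the infinitesimal action of one-parameter M\"obius subgroups (the paper differentiates $\bigl(\tfrac{e^{i\theta}+t}{1+te^{i\theta}}\bigr)^n$ at $t=0$, which is exactly your computation $V_{\pm}e_n = in\,e_{n\pm 1}$, including the vanishing coefficient at $n=0$), and then eliminate the two Hardy-space alternatives by the conjugation symmetry forced by working over $\mathbb{R}$. The only difference is packaging: your $K$-finite-vector density and raising/lowering operators are the representation-theoretic restatement of the paper's hands-on convolution and differentiation steps.
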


\begin{proof}
We will work with the version where $G$ is the M\"obius group
acting on $X = S^1$.

We begin by analyzing the case of complex valued functions. Let $V$ be a closed linear subspace 
of $C(S^1, \C)$ invariant under $G$ that contains a non-constant function $f$.
For all $0 \not = n \in \Z$, the convolution of $f$ with $e^{i n\theta}$ 
$$
\int e^{i n\phi} f(\theta - \phi) \,d\phi = \int e^{i n(\theta - \psi)} f(\psi)\, d\psi = e^{i n\theta} \hat{f}(n),
$$
is also contained in $V$.
Therefore, if $\hat{f}(n) \not=0$ it follows that the function $e^{i n \theta}$ is in $V$.
As $f$ is not a constant there is some $n \not=0$ for which $\hat{f}(n) \not=0$.
We fix such an $n$ and, applying the transformation $\frac{e^{i\theta} +t}{1 +te^{i\theta}}$ to
$e^{in \theta}$, we see that for all $t$, the function $\left(\frac{e^{i\theta} +t}{1 +te^{i\theta}}\right)^n$
belongs to $V$.
Upon differentiating with respect to $t$ at $t =0$, we see that
the function $n( e^{i(n-1)\theta} - e^{i (n+1)\theta})$, and hence also
the functions  $e^{i(n-1)\theta} $ and $e^{i (n+1)\theta}$, are all in $V$.

This procedure can be iterated and we conclude that $V$ contains, either
$$
\{e^{in \theta} : n \ge 0\}, \qquad  \text{or} \qquad  \{e^{-in \theta} : n \ge 0\}, 
\qquad \text{or both}.
$$
Of course in the latter case we have $V = C(S^1,\C)$.

The first alternative happens when $V$ consists of the boundary values of analytic functions 
in $D$ which are continuous on $\bar{D}$; the second, when 
$V$ consists of the boundary values of anti-analytic functions 
in $D$ which are continuous on $\bar{D}$. 

Now, for real valued functions these first two cases do not apply since a non-constant analytic 
function cannot map the boundary to the real line. 
Thus starting with a $G$-invariant closed subspace $U \subset C(S^1,\R)$
which contains a non constant function and considering its
complexification $V = \C \otimes U$, we conclude that 
$U = C(S^1,\R)$ as claimed.
\end{proof}

\vspace{.5cm}

From Corollary \ref{unique} we now get:

\begin{cor}\label{unique1}
For $G = PSL(2,\R)$, the affine system $M(\Pi_s(G)) = M(\mathbb{P}^1)$
is the only nontrivial irreducible affine $G$-system.
\end{cor}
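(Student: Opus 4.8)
The plan is to obtain this corollary as an immediate synthesis of Theorem \ref{main} and Corollary \ref{unique}; the entire substance lies in observing that Theorem \ref{main} provides exactly the hypothesis that the closing sentence of Corollary \ref{unique} requires.

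First I would assemble the identifications already recorded in the text. The system $(\mathbb{P}^1, PSL(2,\R))$ is isomorphic as a dynamical system to the M\"{o}bius system $(S^1, G)$, and, following \cite{F1}, this common system is the universal minimal strongly proximal $G$-system $\Pi_s(G)$. Thus $\Pi_s(G) = (S^1, G) = (\mathbb{P}^1, PSL(2,\R))$, and passing to measures yields an affine $G$-isomorphism $M(\Pi_s(G)) = M(S^1) = M(\mathbb{P}^1)$. Since affine primeness of a dynamical system is by definition a property of its canonical affine compactification $M(X)$, and an isomorphism of dynamical systems induces an affine isomorphism of the associated measure systems, the property transfers freely along these identifications.

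Next I would invoke Theorem \ref{main}, which states precisely that $(\mathbb{P}^1, PSL(2,\R))$, equivalently $(S^1, G)$, equivalently $\Pi_s(G)$, is affinely prime. This is exactly the hypothesis of the final clause of Corollary \ref{unique}: if $\Pi_s(G)$ is affinely prime, then $M(\Pi_s(G))$ is the only nontrivial irreducible affine $G$-system. Translating back through $M(\Pi_s(G)) = M(\mathbb{P}^1)$ then delivers the assertion.

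I do not expect any genuine obstacle. All the analytic effort was expended in the proof of Theorem \ref{main}, via the Fourier-analytic argument on $S^1$ combined with the fact that a non-constant analytic function cannot carry the boundary circle into the real line; and the universality of $M(\Pi_s(G))$ together with the characterization of when it is the unique irreducible affine system were already established in Corollary \ref{unique}. The only care required here is the bookkeeping of the three isomorphic descriptions of the same system, so that affine primeness is correctly transported to $\Pi_s(G)$ before the universality result is applied.
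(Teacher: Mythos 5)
Your proposal is correct and matches the paper exactly: the paper deduces Corollary \ref{unique1} immediately from Theorem \ref{main} (affine primeness of $\Pi_s(G) = (S^1,G) = (\mathbb{P}^1,PSL(2,\R))$) via the final clause of Corollary \ref{unique}, which is precisely your argument.
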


Another immediate consequence of Theorem \ref{main} is the following:

\begin{cor}\label{S1}
The dynamical system $(S^1, \Homeo(S^1))$ is affinely prime.
\end{cor}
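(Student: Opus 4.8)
The plan is to exploit the characterization of affine primality by the LSW property together with the evident monotonicity of LSW in the acting group. The crucial structural fact is that the M\"obius group $G$ preserving $D$ embeds as a subgroup of $\Homeo(S^1)$: every M\"obius transformation restricts to an orientation-preserving homeomorphism of the circle. Hence the two systems $(S^1, G)$ and $(S^1, \Homeo(S^1))$ share the same underlying compact space and differ only in that the latter acts through a larger group.

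First I would recall that, by the Proposition characterizing affine primality (LSW if and only if affinely prime), it suffices to verify that $(S^1, \Homeo(S^1))$ has the LSW property. So fix a non-constant $f \in C(S^1,\R)$ and consider the uniformly closed linear span
$$
V_f^{\Homeo} = \cls\spann\left(\{f^h : h \in \Homeo(S^1)\} \cup \{\ch\}\right).
$$
Because $G \subset \Homeo(S^1)$, the generating set $\{f^g : g \in G\} \cup \{\ch\}$ is contained in $\{f^h : h \in \Homeo(S^1)\} \cup \{\ch\}$, and therefore $V_f^{\Homeo}$ contains the corresponding span $V_f^{G}$ built from the M\"obius group alone.

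Next I would invoke Theorem \ref{main}, which asserts that $(S^1, G)$ has the LSW property, i.e. $V_f^{G} = C(S^1,\R)$ for every non-constant $f$. Combined with the inclusion above, this forces $V_f^{\Homeo} = C(S^1,\R)$ as well. Since $f$ was an arbitrary non-constant continuous function, $(S^1, \Homeo(S^1))$ satisfies LSW, and the Proposition then yields that it is affinely prime.

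The argument has essentially no hard step: the only content is the observation that the LSW property is inherited upward along group inclusions acting on a fixed space, so that affine primality for the M\"obius subgroup propagates immediately to the full homeomorphism group. The single point deserving care is the mild verification that M\"obius maps genuinely restrict to circle homeomorphisms, guaranteeing the inclusion $G \subset \Homeo(S^1)$ that drives the monotonicity.
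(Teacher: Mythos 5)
Your proof is correct and is precisely the argument the paper intends: the paper states the corollary as an immediate consequence of Theorem \ref{main}, the implicit reasoning being exactly your observation that $G \subset \Homeo(S^1)$ makes $V_f^{G} \subseteq V_f^{\Homeo}$, so LSW for the M\"obius action forces LSW (hence affine primality) for the full homeomorphism group.
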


\vspace{.5cm}

\begin{exa}\label{exas2}
%\begin{enumerate}
%\item
As was shown in \cite{F1}, $\Pi_s(G)$, the universal minimal strongly proximal dynamical system
for the group $G = PSL(3,\R)$ is the {\em flag manifold}:
$$
\mathcal{F} = \{(\ell, V) : \ell \subset V \subset \R^3,
\ {\text{ where $\ell$ is a line and $V$ a plane in $\R^3$}}\}.
$$
The dynamical system $(\mathcal{F}, G)$ however is not affinely prime
since it admits  (up to conjugacy) two (isomorphic) proper factors, namely the actions
of $G$ 
%on the projective spaces $\mathbb{P}_1$ and $\mathbb{P}_2$,  
on the Grassman varieties $Gr(3,1)$ and $Gr(3,2)$
consisting of the lines and planes through the origin in $\R^3$, respectively 
(both are copies of the projective plane $\mathbb{P}^2$).
More generally the corresponding flag manifold is the universal minimal strongly 
proximal dynamical system for all the groups $G = PSL(d+1,\R)$, $d \ge 2$
and a similar situation occurs. See Remark \ref{fut} below.

%
%\item
%Let $X$ denote the one-sided reduced sequences on the symbols $\{a, a^{-1}, b, b^{-1}\}$,
%and let $G= F_2$, the free group on the symbols $a$ and $b$, act on $X$ by
%concatenation and cancelation. The dynamical system $(X,G)$ is minimal and strongly
%proximal (see e.g. \cite{Gl} pages 26 and 41). 
%However it is not prime and a fortiori, not affinely prime. 
%To see this let $x = a^\infty = aaa\cdots$ and 
%$y = a^{-\infty} = a^{-1}a^{-1}a^{-1}\cdots$
%and consider the set
%$$
%R = \{(gx, gy), (gy,gx) : g \in G\} \cup \Del_X, 
%$$
%It is easy to see that this is a closed $G$-invariant equivalence relation on $X$
%and consequently the induced map $\pi : X \to X/R$ yields a proper factor of $X$.
%\end{enumerate}
\end{exa}

\begin{rmk}\label{fut}
Let $G = PSL(d +1,\R), d\ge 2$ and $X = \mathbb{P}^{d}$ be the projective space. 
With the natural action
of $G$ on $X$ the system $(X,G)$ is minimal, strongly proximal and prime. 
In fact, we can show that these actions as well are affinely prime. 
We plan to return to this in a future work.
\end{rmk}

\begin{exa}\label{exas3}
Let $X$ denote the one-sided reduced sequences on the symbols $\{a, a^{-1}, 
\allowbreak b, b^{-1}\}$,
and let $G= F_2$, the free group on the symbols $a$ and $b$, act on $X$ by
concatenation and cancelation. The dynamical system $(X,G)$ is minimal and strongly
proximal (see e.g. \cite{Gl} pages 26 and 41). 
However it is not prime and a fortiori, not affinely prime. 
To see this let $x = a^\infty = aaa\cdots$ and 
$y = a^{-\infty} = a^{-1}a^{-1}a^{-1}\cdots$
and consider the set
$$
R = \{(gx, gy), (gy,gx) : g \in G\} \cup \Del_X, 
$$
It is easy to see that this is a closed $G$-invariant equivalence relation on $X$
and consequently the induced map $\pi : X \to X/R$ yields a proper factor of $X$.
\end{exa}

\vspace{.5cm}

\section{Harmonic functions and irreducible affine dynamical systems}

Let $G$ be the group of M\"{o}bius transformations which preserve the unit disk 
$D = \{z \in \C : |z| < 1\}$, as in Theorem \ref{main}. We let $K$ denote the subgroup of
rotations in $G$. The disk $D$ can be identified with the quotient $G/K$ by the map
$g \mapsto g(0) \in D$.
$G$ is a locally compact, unimodular group with Haar measure $dg$, and we can associate to $G$
the Banach spaces $L^1(G)$ and its dual $L^\infty(G)$. With respect to the weak$^*$
topology, $B_R$, the ball of radius $R$ centered at the origin in $L^\infty(G)$,
is compact and metrizable. The group $G$ operates on $B_R$ by $f \mapsto \ ^{g'}\!f$,
where $^{g'}\!f(g) = f(gg'),\  g, g' \in G$. 

Recall that a real valued function $h$ on $D$ is {\em harmonic}
if it satisfies the mean value property:
\begin{equation*}
h(z) = \int_{S^1} h(z + r\zeta)\, d\zeta, \quad
{\text{for every sufficiently small $r$}}.
\end{equation*}

We will show that a harmonic function 
$f(z), \ z \in D, \ |f(z)| \le R$ induces an irreducible affine dynamical system
$(Q_f, G)$ with $Q_f \subset B_R$. Moreover we will see that any irreducible 
affine subsystem $Q \subset B_R$ contains a unique function 
arising from a bounded harmonic function on $D$.
For more background and details on the topic of this section see \cite{F1}.

Given $f$ bounded harmonic on $D$, define  $\tilde{f} \in L^\infty(G)$ by
$\tilde{f}(g) = f(g(0))$. That is, $\tilde{f}$ is the function on $G$ obtained by lifting
$f$ from $G/K$ to $G$.
The mean value property of harmonic functions implies that for $z' \in D$
$$
f(0) = \int_K f(kz')\, dk.
$$
Setting $z' = g'(0)$ we have
$$
f(0) = \int_Kf(kg'(0))\,dk
$$
and since for any $g \in G$, $f\circ g$ is again harmonic
$$
f(g(0)) = \int_K f(gkg'(0))\, dk, \quad {\text{or}}
$$
\begin{equation}\label{harm1}
\tilde{f}(g) = \int_K \tilde{f}(gkg')\, dk, \quad {\text{for any}} \ g, g' \in G.
\end{equation}
Now let $Q_f$ denote the closed, convex span of $\{^{g}\!\tilde{f} : g \in G\}$
in $L^\infty(G)$.
Equation (\ref{harm1}) implies that for any $F \in Q_f$
$$
\tilde{f}(g) = \int_KF(gk)\, dk.
$$
Thus $\tilde{f}$ belongs to the closed convex span of $\{^k\!F: k \in K\}$ for any $F \in Q_f$. 
This shows that $(Q_f,G)$ is an irreducible affine system.

Now let $Q \subset L^\infty(G)$ be any invariant closed convex subset such that
$(Q,G)$ is irreducible. The universal minimal strongly proximal space, $\Pi_s(G)$ is
the unit circle $S^1$ and so, by Corollary \ref{unique}, $(M(S^1),G)$ is the universal irreducible affine
system for $G$. In $M(S^1)$ there is a unique $K$-invariant measure and it follows that
in $Q$ as well, there is a unique $K$-invariant point.
As $Q$ is a space of functions on $G$, its unique $K$ fixed point is a function
$H(g)$ satisfying $H(gk) = H(g)$ for $g \in G, k \in K$.
Thus $H$ depends on $gK$ and is the pullback of a function $h$ on $D$.
For any fixed $g' \in G$ consider the function
$$
H'(g) = \int_K H(gkg')\, dk. 
$$
We have $H' \in Q$ and for $k \in K$,
$H'(gk) = H'(g)$; so $H'$ is $K$-invariant. But this function is unique; so $H'= H$. We
have $H(g) = \int_K H(gkg')\, dk$ or
\begin{equation}\label{h}
h(g) = \int_K h(gkz')\, dk
\end{equation}
for any $z' \in D$.
But, in fact, equation (\ref{h}) characterises harmonic functions. 

This discussion, combined with Theorem \ref{main} proves the following:

\begin{thm}\label{harmonic}
There is a one-to-one correspondence between bounded (non constant) harmonic functions 
$h$ on $D$ and irreducible affine subsystems $(Q,G)$ of $L^\infty(G)$.
Namely
$$
h \longleftrightarrow Q = Q_h,
$$
where $\tilde{h}$, the lift of $h$ to $G$, is the unique $K$-invariant function in $Q$.
Moreover, all the affine systems $Q_h$ are isomorphic to the universal
irreducible affine system $(M(S^1),G) = (M(\Pi_s(G)),G)$.
\end{thm}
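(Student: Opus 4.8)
The plan is to read the bijection off the two constructions already prepared in the discussion above, and then to promote the universal factor map $M(S^1)\to Q_h$ to an isomorphism by invoking affine primeness (Theorem \ref{main}). Concretely I must verify three things: that the assignment $h\mapsto Q_h$ genuinely produces an irreducible subsystem (this is the computation surrounding equation (\ref{harm1}) above); that the reverse assignment $Q\mapsto h_Q$, sending an irreducible $Q$ to the harmonic function whose lift is the unique $K$-fixed point of $Q$, is a two-sided inverse; and that the universal affine homomorphism onto $Q_h$ is injective.

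The step I expect to be the main obstacle is the \emph{uniqueness} of the $K$-fixed point in an arbitrary irreducible $Q\subset L^\infty(G)$, which is what makes $Q\mapsto h_Q$ well defined. Here I would argue as follows. Since $K$ is a compact group acting affinely and continuously on the compact convex set $Q$, for each $q\in Q$ the barycenter $\int_K kq\,dk$ is a well-defined $K$-fixed point of $Q$. By Corollary \ref{unique} there is a unique surjective affine $G$-homomorphism $\Theta:M(S^1)\to Q$; writing $q=\Theta(\mu)$ and using that $\Theta$ is continuous, affine and $K$-equivariant,
$$
\int_K kq\,dk=\Theta\Big(\int_K k\mu\,dk\Big).
$$
Because $K$ acts transitively on $S^1$, the averaged measure $\int_K k\mu\,dk$ equals the unique $K$-invariant measure $m$ on $S^1$, independently of $\mu$. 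Hence $\int_K kq\,dk=\Theta(m)$ for every $q\in Q$; in particular any $K$-fixed $q$ satisfies $q=\Theta(m)$, so the $K$-fixed point is unique. By the computation in the discussion it descends to a function $h_Q$ on $D=G/K$ obeying equation (\ref{h}); since (\ref{h}) characterises harmonic functions and $Q$ is norm-bounded, $h_Q$ is a bounded harmonic function.

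That the two assignments are mutually inverse is then formal. Starting from a bounded harmonic $h$, the lift $\tilde h$ is $K$-fixed (since $k(0)=0$ for $k\in K$ forces ${}^{k}\tilde h=\tilde h$) and lies in $Q_h$, so by the uniqueness just established it is the unique $K$-fixed point of $Q_h$, and the recovered function is $h$ itself. Conversely, given an irreducible $Q$ with unique $K$-fixed point $\tilde h$, the orbit hull $Q_h=\ol{co}\{{}^{g}\tilde h:g\in G\}$ is a nonempty closed convex $G$-invariant subset of $Q$, so irreducibility forces $Q_h=Q$.

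Finally, for the isomorphism claim: each $Q_h$ is a nontrivial (as $h$ is non-constant) irreducible affine $G$-system, so Corollary \ref{unique} supplies a unique surjective affine homomorphism $\Theta:M(S^1)\to Q_h$. Theorem \ref{main} asserts that $(S^1,G)$ is affinely prime, i.e.\ that $M(S^1)$ admits no proper affine factor; thus $\Theta$ is injective and hence an affine isomorphism. This simultaneously shows that every $Q_h$ is isomorphic to $(M(S^1),G)=(M(\Pi_s(G)),G)$, completing the proof.
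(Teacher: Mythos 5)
Your proposal is correct and takes essentially the same route as the paper: the two constructions from the preceding discussion (equations (\ref{harm1}) and (\ref{h})) give the correspondence, and the isomorphism claim follows by combining the universal surjection of Corollary \ref{unique} with the affine primeness established in Theorem \ref{main}. Your averaging argument via $\Theta$ and the transitivity of $K$ on $S^1$ is a correct filling-in of the one step the paper leaves implicit, namely the assertion that ``it follows that in $Q$ as well, there is a unique $K$-invariant point.''
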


\vspace{.5cm}

\end{document}